\newtheorem{theorem}{Theorem}[section]
\newtheorem*{corollary}{Corollary}
\newtheorem*{remark}{\it Remarks\/}
\newtheorem{rem}{\it Remark\/}
\newtheorem*{remk}{\it Remark\/}
\newcommand{\la}{\lambda}
\newcommand{\RR}{{\mathbb{R}}}
\renewcommand{\div}{\mathop{{\rm div}}}
\newcommand{\tq}{\;\vrule height 12pt depth 5pt\;}
\newcommand{\meas}{\mathop{{\rm meas}}}
\newcommand{\vol}{\mathop{{\rm vol}}}
\title[Symmetrization on the Sphere]{Symmetrization on the sphere and applications}%
\author{Satyanad Kichenassamy}%
\address{D. M. I., \'Ecole Normale Sup\'erieure, 45 rue d'Ulm,
75230 Paris Cedex 05, France}%
\address{Courant Institute of Mathematical Sciences, 251 Mercer Street, New
York, N.Y. 10012}
\thanks{Appeared in: \emph{Nonlinear Partial Differential Equations and Applications}, Coll\`ege de France Seminar (1987-1988), vol.~X,
H.~Brezis and J.-L.~Lions (eds.), Pitman Research Notes in Mathematics, \textbf{220}, (1991) 271-283.}%
\begin{document}
\maketitle

\begin{abstract} [added 2025] We introduce a new method of symmetrization of mappings on the $n$-sphere ($n\geq 2$). They are applied to estimate solutions of quasilinear elliptic partial differential equations of $p$-Laplacian type, with combinations of Dirac measures on the right-hand side. The case $p=n$ is reduced to a problem on the sphere, using a conformal transformation. The cases when $1<p<n$ and $p>n$ are considered more briefly, full details being available in other papers of the author.
\end{abstract}

In this paper, some new results are proved on the symmetrization of
mappings on the $n$-sphere ($n\geq 2$) with weak regularity assumptions and used to estimate solutions of nonlinear elliptic partial differential
equations.

 Recall that symmetrization techniques on an open subset of Euclidean
space give estimates on the distribution function $t\mapsto \meas(u > t)$ of a
function $u$ solving a partial differential equation. The result is
interpreted as an estimate of the \emph{spherical rearrangement} (or Schwarz
symmetrization) $u^*$ of $u$ in terms of the solution of a "symmetrized"
problem.

These techniques brought about the manipulation of sets with nonsmooth
boundary, the level sets of the solution; and the relevant way of
measuring the boundary of such sets is given by de Giorgi's perimeter.

We shall here give some generalizations of the aforementioned techniques
to the sphere $S^n$, $n\geq 2$. In section 1, we define the symmetrization and rearrangement of functions defined on the sphere. We then prove (improving
Sperner [11]) that the symmetrization decreases total variation. De Giorgi's
perimeter has to be replaced by the perimeter on a manifold (following
Kichenassamy [5, 6]) which we recall here. Section 2 is concerned with the
estimation of u, weak solution of a quasilinear equation in divergence
form on $S^n$, with right-hand side in $L^1$, by an adaption of symmetrization methods on a bounded open set for symmetrization in n-space, see [1, 2, 7, 3, 9, 12, 13, 14]); these results are then used to solve a quasilinear problem in $\RR^n$, the solution of which could not be found by standard symmetrization methods.

\section{Symmetrization on the sphere}
\subsection{Definitions}

We consider throughout this section $u\in L^1(S^n)$, where the $n$-sphere $S^n$ ($n\geq 2$) [p.~273] is endowed with the Riemannian structure induced by the imbedding $S^n\subset\RR^{n+1}$ (Euclidean space). We define the distribution function $\mu$ of $u$
which associates to $t\in\RR$
\begin{equation}\label{1}
\mu(t) = \meas (u > t)
\end{equation}
and the one-dimensional decreasing rearrangement $\bar u$ which is defined on $[0,\vol(S^n)]$ by
\begin{equation}\label{2}
\bar u(s) = \inf\{t\in\RR\tq \mu(t) < s\}.
\end{equation}
The symmetrization $u^*$ of $u$ is the only mapping which has geodesic balls about the North pole as level sets and which is equimeasurable with $u$
(i.e., $\meas(u^* > t) = \meas(u > t)$ for every $t$). We define $E^*$ by $\chi_E^* = (\chi_E)^*$ for any $E\subset S^n$, measurable.

\begin{remark} \emph{ (i) Knowledge of $\mu$ is equivalent to that of $\tilde u$ or $u^*$.}

\emph{(ii) Writing $b(t,x) =\chi_{u>t\geq 0} - \chi_{u(x)\leq t<0}$ for $t\in\RR$, $x\in S^n$, we have,  for every $x$,
\begin{equation}\label{3}
u(x) = \int_{-\infty}^{+\infty}b(t,x) dt
\end{equation}
and }
\begin{equation}\label{4}
u^*(x) = \int_{-\infty}^{+\infty}b(t,x)^* dt
\end{equation}

\emph{(iii) $(u + C)^* = u^* + C$ for every $C$ real.}

\emph{(iv) $(\la u)^* = \la u^*$ for every positive $\la$.}

\emph{(v) $(-\chi_E)^*=-\chi_E^*$ a.e.}

\emph{(vi)} $u\leq v \Rightarrow u^* \leq v^*$.

\emph{(vii)}  $\|u^*-v^*\|_{L^1}\Rightarrow \|u - v\|_{L^1}$.
\end{remark}
[p.~274]
We give the proof of (vii), the other properties being simpler to prove.

\underline{Proof of (vii).} By definition of the symmetrization, one has
$\int_{S^n}\varphi(u)=\int_{S^n}\varphi(u^*)$ for $\varphi=\chi_I$, where I is an interval in $\RR$. By
approximation, the same property holds for continuous $\varphi$ (say) and in
particular
\[\int_{S^n}u=\int_{S^n}u^*.
\]
We recall an argument from [8] using a technique of Crandall and Tartar.
Let $\sup(u,v) = w$. Then, as $w^* \geq sup(u^*,v^*)$,
\begin{eqnarray*}
\int_{S^n}|u^* - v^*| &\leq &  \int_{S^n}(|u^* - w^*|+|w^* - v^*|)\\
& & \int_{S^n}(2w^*-u^*-v^*) = \int_{S^n}(2w-u-v),
\end{eqnarray*}
because symmetrization preserves the integral. Since $2w-u -v = |u-v|$,
we obtain the desired inequality.

\subsection{Perimeter on $S^n$}
We recall here (from Kichenassamy [5,6], generalizing de Giorgi [3]) the
definition and properties of the perimeter of a measurable set of a
Riemannian, compact, orientable manifold, here $S^n$. We denote by $dV$ the volume element on our manifold and define the perimeter of any measurable
subset E of that manifold by
\begin{equation}\label{5}
P(E) = \sup_{\varphi\text{ smooth vector field}, |\varphi|\leq 1}\int_E \div\varphi\; dV.
\end{equation}
This amounts to saying that $P(E)$ is the total variation of $\chi_E$ where we
define the total variation of $u\in L^1$ by [p.~275]
\begin{equation}\label{6}
V(u) = \sup_{\varphi\text{ smooth}, |\varphi|\leq 1}\int_{S^n} u\div\varphi\; dV.
\end{equation}

(i) \underline{Elementary properties.}
First, $P(S^n \setminus E) = P(E)$. Second, if $u_m \to u$ in $L^1(S^n)$, then $V(u) \leq \liminf V(u_m)$.

(ii) \underline{Smoothing property.} If $u(t)$ is the solution of the heat equation
with u as initial data one has
\begin{theorem}
    There is a constant $c\geq 0$ such that $e^{-ct}\int_{S^n} |du(t)|$ is non-increasing on ${t > a}$. Its limit as $t\downarrow 0$ is equal to $V(u)$.
\end{theorem}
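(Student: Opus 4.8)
The plan is to reduce the whole statement, via the semigroup property of the heat flow, to a single estimate — that the heat semigroup does not increase total variation (up to an exponential factor) — and to prove that estimate by the Bochner identity on $S^n$. Write $u(t)=P_tu$ for the solution of the heat equation with initial value $u$. By parabolic regularity $u(t)\in C^\infty(S^n)$ for every $t>0$, so $|du(t)|=|\nabla u(t)|$ is a genuine (Lipschitz) function, and for any smooth $f$ one has $V(f)=\int_{S^n}|\nabla f|\,dV$: indeed $\int_{S^n}f\,\div\varphi\,dV=-\int_{S^n}\langle\nabla f,\varphi\rangle\,dV\le\int_{S^n}|\nabla f|\,dV$ when $|\varphi|\le1$, with equality approached along $\varphi=-\nabla f/\sqrt{|\nabla f|^2+\delta^2}$ as $\delta\downarrow0$. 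Thus the object to control is $F(t):=V(u(t))=\int_{S^n}|\nabla u(t)|\,dV$ for $t>0$, and the theorem will follow from: (a) $V(P_sf)\le e^{cs}V(f)$ for smooth $f$; (b) the semigroup property $P_{t}=P_{t-s}P_s$; (c) lower semicontinuity of $V$ (recalled above: $u_m\to u$ in $L^1$ implies $V(u)\le\liminf V(u_m)$) together with $L^1$-continuity of $P_t$ at $t=0$.

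For (a), fix a smooth $f$, set $u(t)=P_tf$ and differentiate $|\nabla u|^2$ along the flow. Using $\pa_tu=\Delta u$ and the Bochner formula $\tfrac12\Delta|\nabla u|^2=|\nabla^2u|^2+\langle\nabla u,\nabla\Delta u\rangle+\mathrm{Ric}(\nabla u,\nabla u)$ one gets
\[
\pa_t|\nabla u|^2=\Delta|\nabla u|^2-2|\nabla^2u|^2-2\,\mathrm{Ric}(\nabla u,\nabla u)\le\Delta|\nabla u|^2+2c|\nabla u|^2-2|\nabla^2u|^2 ,
\]
where $c\ge0$ is any constant with $\mathrm{Ric}_{S^n}\ge -c\,g$; on $S^n$, $\mathrm{Ric}=(n-1)g$, so $c=0$ is admissible. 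Since $|\nabla u|$ vanishes in general, put $w_\ep:=\sqrt{|\nabla u|^2+\ep^2}$, which is smooth and positive. A short computation (the product rule $\Delta w_\ep^2=2w_\ep\Delta w_\ep+2|\nabla w_\ep|^2$, Kato's inequality $\bigl|\nabla|\nabla u|\bigr|\le|\nabla^2u|$ to bound $|\nabla|\nabla u|^2|^2\le4|\nabla u|^2|\nabla^2u|^2$, and $|\nabla u|^2\le w_\ep^2$) absorbs the gradient term into the good term $-|\nabla^2u|^2$ and yields the clean pointwise inequality
\[
\pa_tw_\ep\le\Delta w_\ep+cw_\ep\qquad\text{on }S^n\times(0,\infty) .
\]
Integrating over the closed manifold $S^n$ annihilates the Laplacian term, so $\frac{d}{dt}\int_{S^n}w_\ep\,dV\le c\int_{S^n}w_\ep\,dV$; Gronwall gives $\int_{S^n}w_\ep(t)\,dV\le e^{ct}\int_{S^n}w_\ep(0)\,dV$, and letting $\ep\downarrow0$ (monotone convergence) produces $\int_{S^n}|\nabla u(t)|\,dV\le e^{ct}\int_{S^n}|\nabla f|\,dV$, i.e. $V(P_tf)\le e^{ct}V(f)$.

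It remains to assemble the conclusion. For $0<s<t$ the function $u(s)$ is smooth, so applying (a) to $u(t)=P_{t-s}(u(s))$ gives $F(t)\le e^{c(t-s)}F(s)$, that is, $e^{-ct}F(t)\le e^{-cs}F(s)$; hence $t\mapsto e^{-ct}F(t)$ is non-increasing on $(0,\infty)$ and $L:=\lim_{t\downarrow0}F(t)=\lim_{t\downarrow0}e^{-ct}F(t)$ exists in $[0,+\infty]$. Since the heat kernel is an approximate identity, $u(t)\to u$ in $L^1(S^n)$ as $t\downarrow0$, so lower semicontinuity of $V$ gives $V(u)\le\liminf_{t\downarrow0}F(t)=L$. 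Conversely, if $V(u)<\infty$, choose smooth $u_k\to u$ in $L^1(S^n)$ with $V(u_k)\to V(u)$ (the standard strict approximation of $BV$ functions, obtained via charts and mollification); then for each fixed $t>0$, $P_tu_k\to P_tu$ in $L^1$, whence $F(t)=V(u(t))\le\liminf_kV(P_tu_k)\le e^{ct}\liminf_kV(u_k)=e^{ct}V(u)$, so $L\le V(u)$; if $V(u)=\infty$ this bound is vacuous. Therefore $L=V(u)$, which completes the proof.

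The one delicate point is the core estimate (a): passing from the Bochner inequality — which controls $|\nabla u|^2$ but not the non-smooth $|\nabla u|$ — to an honest parabolic differential inequality for $w_\ep$; once Kato's inequality closes that computation, the integration on the closed manifold, Gronwall, and $\ep\downarrow0$ are routine. (An alternative proof of (a), valid at once for every $u\in L^1$, commutes $P_t$ with $\div$ through $e^{t\Delta}d^{*}=d^{*}e^{t\Delta_H}$ and invokes the Weitzenböck domination $|e^{t\Delta_H}\varphi^{\flat}|\le e^{ct}P_t|\varphi|\le e^{ct}$ for the heat flow on $1$-forms, which sidesteps the approximation step but uses the Hodge Laplacian on forms.)
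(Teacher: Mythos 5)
The paper itself gives no proof of this theorem: it is recalled from the author's earlier works [5, 6], so there is no internal argument to compare yours against; judged on its own, your proof is correct and self-contained. The core estimate $V(P_tf)\le e^{ct}V(f)$ for smooth $f$ is sound: the Bochner identity plus the regularization $w_\varepsilon=\sqrt{|\nabla u|^2+\varepsilon^2}$ (where $|\nabla w_\varepsilon|\le|\nabla^2u|$ follows from Cauchy--Schwarz applied to $\nabla|\nabla u|^2=2\nabla^2u(\nabla u,\cdot)$) does yield $\pa_t w_\varepsilon\le\Delta w_\varepsilon+cw_\varepsilon$, and on $S^n$ one even gets $c=0$ since $\mathrm{Ric}=(n-1)g\ge0$, consistent with the statement's $c\ge0$. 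The semigroup step correctly upgrades this to monotonicity of $e^{-ct}\int_{S^n}|du(t)|$ on $(0,\infty)$, and the identification of the limit is handled properly: lower semicontinuity of $V$ under $L^1$ convergence (which the paper lists among the elementary properties) gives $V(u)\le L$, while the reverse inequality uses the strict approximation of a finite-variation function by smooth $u_k$ with $u_k\to u$ in $L^1$ and $V(u_k)\to V(u)$. That Meyers--Serrin/Anzellotti--Giaquinta-type approximation on a compact manifold is the one ingredient imported from outside the paper's toolkit; it is standard, and you are right to obtain it by charts and mollification rather than by the heat flow itself, which would be circular here. Your parenthetical alternative---commuting the semigroup with $d$ and invoking the Hess--Schrader--Uhlenbrock domination of the Hodge heat semigroup on $1$-forms---proves $V(P_tu)\le e^{ct}V(u)$ for all $u\in L^1$ at once and dispenses with that approximation step; it is arguably the cleaner route and quite possibly closer to the argument of [5, 6], but both versions you give are complete.
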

This means that we can characterize functions with measures as
derivatives by the behaviour of their smoothing by the heat kernel.

(iii) \underline{Generalization of the Fleming-Rishel formula.}
\begin{theorem}
    $V(u)={}_*\!\int_{-\infty}^{+\infty}P(u > t) dt$.
\end{theorem}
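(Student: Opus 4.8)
The plan is to establish the identity by the two complementary inequalities
$$V(u)\le{}_*\!\int_{-\infty}^{+\infty}P(u>t)\,dt\qquad\text{and}\qquad{}_*\!\int_{-\infty}^{+\infty}P(u>t)\,dt\le V(u),$$
the lower integral being forced on us since $t\mapsto P(u>t)$ need not be Borel a priori. If $V(u)=+\infty$, the first inequality already makes the lower integral infinite, so when proving the second one we may assume $V(u)<\infty$.

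\emph{First inequality.} Fix a smooth vector field $\varphi$ with $|\varphi|\le1$. Recall from Remark~(ii) that $u(x)=\int_{-\infty}^{+\infty}b(t,x)\,dt$, where $b(t,x)=\chi_{\{u>t\}}(x)$ for $t\ge0$ and $b(t,x)=-\chi_{\{u\le t\}}(x)$ for $t<0$; since $\int_{-\infty}^{+\infty}|b(t,x)|\,dt=|u(x)|$ with $u\in L^1$ and $\div\varphi$ bounded, Fubini's theorem gives
$$\int_{S^n}u\,\div\varphi\,dV=\int_{-\infty}^{+\infty}\Big(\int_{S^n}b(t,x)\,\div\varphi(x)\,dV(x)\Big)dt.$$
For $t\ge0$ the inner integral is $\int_{\{u>t\}}\div\varphi\,dV\le P(u>t)$; for $t<0$ it equals $-\int_{\{u\le t\}}\div\varphi\,dV$, and since $\int_{S^n}\div\varphi\,dV=0$ on the closed manifold $S^n$ this is $\int_{\{u>t\}}\div\varphi\,dV\le P(u>t)$ (one may instead invoke $P(S^n\setminus E)=P(E)$ applied to $\pm\varphi$). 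Thus the $t$-integrand, which is measurable in $t$, is everywhere dominated by $P(u>t)$, so $\int_{S^n}u\,\div\varphi\,dV\le{}_*\!\int_{-\infty}^{+\infty}P(u>t)\,dt$; taking the supremum over $\varphi$ yields the first inequality.

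\emph{Second inequality.} For smooth $u$ this is actually an equality: the classical coarea formula reads $\int_{S^n}|du|\,dV=\int_{-\infty}^{+\infty}\mathcal H^{n-1}(u^{-1}(t))\,dt$; by Sard's theorem almost every $t$ is a regular value, for which $\{u>t\}$ has smooth boundary $u^{-1}(t)$ and hence $P(u>t)=\mathcal H^{n-1}(u^{-1}(t))$; and $V(u)=\int_{S^n}|du|\,dV$ for smooth $u$ by definition of $V$. For general $u$ with $V(u)<\infty$, let $u_m=u(t_m)$ be the solution of the heat equation with datum $u$ at times $t_m\downarrow0$: each $u_m$ is smooth, $u_m\to u$ in $L^1(S^n)$, and by Theorem~1.1 (together with $\int_{S^n}|du_m|\,dV=V(u_m)$) one has $V(u_m)\to V(u)$. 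Since $\int_{-\infty}^{+\infty}\|\chi_{\{u_m>t\}}-\chi_{\{u>t\}}\|_{L^1}\,dt=\|u_m-u\|_{L^1}\to0$ (layer-cake plus Fubini), after passing to a subsequence (not relabeled) $\chi_{\{u_m>t\}}\to\chi_{\{u>t\}}$ in $L^1(S^n)$ for a.e. $t$, whence $P(u>t)\le\liminf_m P(u_m>t)$ a.e. by lower semicontinuity of the perimeter (elementary property (i)). Since each $t\mapsto P(u_m>t)$ is measurable (equal a.e. to $\mathcal H^{n-1}(u_m^{-1}(t))$), Fatou's lemma in the form adapted to the lower integral gives
$${}_*\!\int_{-\infty}^{+\infty}P(u>t)\,dt\le\int_{-\infty}^{+\infty}\liminf_m P(u_m>t)\,dt\le\liminf_m\int_{-\infty}^{+\infty}P(u_m>t)\,dt=\liminf_m V(u_m)=V(u),$$
which together with the first inequality proves the theorem.

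The two delicate points are the measurability bookkeeping imposed by the lower integral — which is precisely why ${}_*\!\int$, and not an ordinary integral, figures in the statement — and the existence of smooth approximants $u_m$ with $V(u_m)\to V(u)$. The latter is the genuine obstacle, and it is resolved by the heat-kernel smoothing of Theorem~1.1, which is therefore the real engine behind the reverse inequality; once it is in hand, lower semicontinuity of the perimeter and Fatou's lemma do the rest.
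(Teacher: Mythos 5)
Your proof is correct and follows essentially the same route as the paper: the inequality $V(u)\le{}_*\!\int P(u>t)\,dt$ via the layer-cake decomposition of $u$ and Fubini, and the reverse inequality via heat-kernel smoothing (Theorem 1.1), the classical coarea formula for the smooth approximants, lower semicontinuity of the perimeter under $L^1$ convergence of the level sets, and Fatou. Your extra care with the Fubini justification, the $t<0$ levels, the subsequence extraction for a.e.-$t$ convergence of $\chi_{\{u_m>t\}}$, and the measurability bookkeeping for the lower integral only makes explicit what the paper leaves implicit.
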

This theorem generalizes the corresponding result in $n$-space due to
Fleming and Rishel [4].
\begin{proof}(a) For $\varphi$ smooth vector field of length 1 or less, one writes
\begin{equation}\label{7}
\int u\div\varphi\; dV = \int_{-\infty}^{+\infty} \left(\int b(t,x) \div\varphi\; dV(x)\right)dt,
\end{equation}
which one estimates after inversion of the integrations. As the variation
of $b(t,.)$ is $P(u > t)$ one has
\begin{equation}\label{8}
V(u) \leq{\vphantom{\int}}_*\!\int_{-\infty}^{+\infty} P(u > t)\, dt.
\end{equation}

(b) To complete the proof, one uses the smoothing property: there are
functions $u_m$ smooth enough for us to write
[p.~276]
\begin{equation*}
V(u_m) = \int |du_m| =\int_{-\infty}^{+\infty}P(u_m > t)\, dt
\end{equation*}
by the classical co-area formula, and which satisfy as $m\to\infty$,
\begin{equation}\label{9}
\|u_m - u\|_{L^1}\to 0; \quad u_m\to u\text{ a.e.}; \quad
V(u_m)\to V(u).
\end{equation}
Now for every $t$, $\chi_{u_m >t}\to \chi_{u >t}$
in $L^1$, by dominated convergence.
Therefore
\begin{equation}\label{10}
\liminf P(u_m > t) \geq P(u > t)
\end{equation}
and we have
\begin{eqnarray}
V(u) = \lim V(u_m) &=& \lim \int_{-\infty}^{+\infty}P(u_m > t) dt \nonumber \\
&\geq & \liminf \int_{-\infty}^{+\infty}P(u_m > t)\, dt \geq \int_{-\infty}^{+\infty}P(u > t)\, dt,
\end{eqnarray}
which ends the proof.

Note that this argument does not make use of any
approximation by piecewise linear functions, as was the case in [4].
\end{proof}

(iv) One also has a \underline{general isoperimetric inequality} of the form
\begin{equation}\label{12}
P(E)\geq C\min(|E|,|S^n\setminus E|)^{1-1/n}.
\end{equation}
In fact, we shall obtain below a more precise result.

\subsection{Symmetrization and total variation}
We prove here
\begin{theorem}
 $V(u^*) \leq V(u)$.
\end{theorem}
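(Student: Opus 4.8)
The plan is to reduce the inequality to its level-set (isoperimetric) form by means of the co-area formula of Section~1.2, and then to feed in the sharp isoperimetric inequality on $S^n$. First I would apply the generalized Fleming--Rishel formula (Theorem~1.2) to both $u$ and $u^*$, obtaining
\[
V(u) = {}_*\!\int_{-\infty}^{+\infty} P(u>t)\,dt,
\qquad
V(u^*) = {}_*\!\int_{-\infty}^{+\infty} P(u^*>t)\,dt .
\]
Now the two defining properties of $u^*$ --- equimeasurability with $u$, and the fact that its superlevel sets are geodesic balls about the North pole --- force $\{u^*>t\}$ to be a geodesic ball of volume $\meas(u^*>t)=\meas(u>t)=\mu(t)$; hence $\{u^*>t\}$ agrees up to a null set with the symmetrized set $(u>t)^*$, and $P(u^*>t)=P\bigl((u>t)^*\bigr)$ for every $t$. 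In particular $t\mapsto P(u^*>t)$, being the composition of the (continuous) isoperimetric profile of $S^n$ with the non-increasing function $\mu$, is measurable, so the lower integral defining $V(u^*)$ is an ordinary integral: $V(u^*)=\int_{-\infty}^{+\infty}P\bigl((u>t)^*\bigr)\,dt$.

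Second, I would invoke the sharp isoperimetric inequality on the sphere --- the precise form promised in (iv) --- namely that among all measurable subsets of $S^n$ of a prescribed volume a geodesic ball has least perimeter. Applied to $E=\{u>t\}$, whose symmetrized set $(u>t)^*$ has the same volume by equimeasurability, this gives $P\bigl((u>t)^*\bigr)\le P(u>t)$ for every $t$; integrating in $t$ then yields
\[
V(u^*) = \int_{-\infty}^{+\infty}P\bigl((u>t)^*\bigr)\,dt \;\le\; {}_*\!\int_{-\infty}^{+\infty}P(u>t)\,dt = V(u).
\]
Should one wish to prove, rather than cite (E.~Schmidt), the sharp spherical isoperimetric inequality, the machinery of Section~1.2 is enough: approximate $\chi_E$ in $L^1$ by smooth functions $v_m$ with $V(v_m)\to P(E)$ via the smoothing property (Theorem~1.1), apply the classical smooth isoperimetric inequality on $S^n$ and the classical co-area formula to the $v_m$, and pass to the limit using the lower semicontinuity of $P$ recalled in~(i).

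The substantive content, and hence the main obstacle, is exactly this geometric input: one must know that geodesic balls --- and not merely sets of some loosely controlled shape, as in the crude bound (iv) --- are the perimeter minimizers at fixed volume, since it is this and only this that the reduction above converts into $V(u^*)\le V(u)$. By contrast the measure-theoretic bookkeeping is routine; the one mild subtlety is that $t\mapsto P(u>t)$ need not be Borel (which is exactly why Theorem~1.2 is stated with a lower integral), and this is why I was careful above to note that $t\mapsto P\bigl((u>t)^*\bigr)$ is measurable, so that the pointwise isoperimetric inequality indeed passes to the integrals.
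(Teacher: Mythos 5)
Your proof is correct, but it follows a genuinely different route from the paper's. You reduce the function-level inequality to the set-level one: via Theorem~1.2 you write $V(u)$ and $V(u^*)$ as ($*$-lower) integrals of $P(u>t)$ and $P(u^*>t)$, note that $\{u^*>t\}=(u>t)^*$ up to a null set, and then feed in the sharp spherical isoperimetric inequality $P\bigl((u>t)^*\bigr)\le P(u>t)$ for each $t$ --- in effect proving the Corollary ($P(E^*)\le P(E)$) first and deducing the Theorem from it. The paper goes the other way and never passes through Fleming--Rishel at this point: it first observes that the inequality holds for \emph{smooth} $u$ (citing Sperner~[11], who uses Schmidt's isoperimetric inequality~[10]), then approximates a general $u$ with $V(u)<\infty$ by heat-kernel regularizations $u_m$, so that $u_m\to u$ in $L^1$ and $V(u_m)\to V(u)$ (Theorem~1.1), invokes the $L^1$-nonexpansiveness of symmetrization from Remark~(vii) to get $(u_m)^*\to u^*$ in $L^1$, and closes with the lower semicontinuity of $V$ under $L^1$ convergence; the Corollary is then a one-line consequence of the Theorem applied to $\chi_E$. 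Your approach makes the geometric content transparent --- the statement really is an integrated isoperimetric inequality --- and correctly anticipates that the crude bound (iv) is not enough, one needs the sharp form. The cost is that you must confront the measurability of $t\mapsto P(u>t)$ (you handle it by noting that the symmetrized integrand $t\mapsto P(u^*>t)$ is genuinely measurable and a pointwise minorant), and you must extend Schmidt's inequality to all finite-perimeter sets by an approximation which is essentially the same heat-kernel-plus-semicontinuity step the paper performs once and for all at the function level. The paper's route packages both difficulties into the single approximation $u_m\to u$, which is tidier, but it leans on Sperner for the smooth case and on the not-entirely-obvious property (vii); your route instead leans directly on the isoperimetric inequality and the co-area theorem already proved in Section~1.2.
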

[p.~277]
Let us note a direct consequence of this property:
\begin{corollary}$P(E) \geq P(E^*)$.
\end{corollary}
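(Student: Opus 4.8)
The plan is to read the Corollary off Theorem~1.4 by specialization. For a measurable $E\subset S^n$, putting $u=\chi_E$ in the definition \eqref{6} of total variation reproduces verbatim the definition \eqref{5} of perimeter, so $V(\chi_E)=P(E)$; and the defining relation $\chi_E^*=(\chi_E)^*$ for $E^*$, together with $V(\chi_F)=P(F)$ applied to $F=E^*$, gives $V\big((\chi_E)^*\big)=V(\chi_{E^*})=P(E^*)$. Hence Theorem~1.4 applied to $u=\chi_E$ reads $P(E^*)=V(\chi_E^*)\le V(\chi_E)=P(E)$, which is the assertion. No further work is needed once Theorem~1.4 is in hand.

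It is worth recording why this is not merely a formal consequence. In the presence of the generalized Fleming--Rishel formula (Theorem~1.3) the Corollary is in fact \emph{equivalent} to Theorem~1.4 and is its geometric core: writing $E_t=(u>t)$, equimeasurability of $u^*$ with $u$ forces $(u^*>t)$ to be the geodesic ball about the North pole of measure $\mu(t)$, i.e.\ $(u^*>t)=E_t^*$ for a.e.\ $t$ (the at most countably many jump values of $\mu$ contributing only null boundary spheres), so that $V(u^*)={}_{*}\!\int_{-\infty}^{+\infty}P(E_t^*)\,dt$ while $V(u)={}_{*}\!\int_{-\infty}^{+\infty}P(E_t)\,dt$, and $V(u^*)\le V(u)$ is obtained by integrating the pointwise inequality $P(E_t^*)\le P(E_t)$. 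This also explains the remark following \eqref{12}: the Corollary is exactly the \emph{sharp} isoperimetric inequality on $S^n$, geodesic balls minimizing perimeter among sets of prescribed volume, which refines the crude bound \eqref{12}.

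If instead one wished to establish $P(E)\ge P(E^*)$ independently of Theorem~1.4, I would first reduce $\chi_E$ to smooth competitors: mollify $\chi_E$ by the heat semigroup to obtain smooth $u_m\to\chi_E$ in $L^1(S^n)$ with $V(u_m)\to V(\chi_E)=P(E)$ (the smoothing property, Theorem~1.2), invoke the $L^1$-continuity of symmetrization (Remark~(vii)) to get $u_m^*\to(\chi_E)^*=\chi_{E^*}$ in $L^1$, and then use lower semicontinuity of $V$ (property~(i) of \S1.2) to conclude $P(E^*)=V(\chi_{E^*})\le\liminf V(u_m^*)$; for the smooth $u_m$ the classical co-area formula reduces everything to $P(F)\ge P(F^*)$ for the super-level sets $F$ of $u_m$. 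The genuine obstacle, and the only nontrivial geometric input, is precisely this set inequality: the isoperimetric inequality on $S^n$ in sharp form, which one can either cite or prove within the perimeter framework by two-point (polarization) symmetrization of sets — each polarization does not increase $P$, and iterated polarizations drive $F$ to the geodesic cap $F^*$ — or by successive spherical symmetrizations, in either case taking care with the de Giorgi perimeter and reduced boundaries and with the identification of $(u^*>t)$ with $(u>t)^*$ away from null sets.
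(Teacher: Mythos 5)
Your derivation is correct and matches the paper's: the Corollary is presented there as ``a direct consequence'' of the theorem $V(u^*)\le V(u)$, which is precisely your specialization $u=\chi_E$ combined with $V(\chi_F)=P(F)$ and the defining identity $\chi_{E^*}=(\chi_E)^*$. The additional material you include (the equivalence with the theorem via the Fleming--Rishel formula, the identification with the sharp isoperimetric inequality on $S^n$, and the alternative heat-kernel/polarization route) is sound but goes beyond the one-line argument the paper records.
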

\begin{proof} We may assume $V(u) <\infty$. We
then the assertion holds (Sperner [11], using an isoperimetric inequality from
from [10]). Smoothing by the heat kernel gives rise to $u_m$ such that
$u_m\to u$ in $L^1$ and a.e.
It follows that $(u_m)^* \to u^*$ in $L^1$ too. Therefore
\begin{equation}\label{13}
V(u) \geq\liminf V(u_m^*) \geq V(u^*).
\end{equation}
\end{proof}

\section{Application to nonlinear partial differential equations}
\smallskip

\textbf{2.2.} We show here that if $u\in W^{1,n}(S^n)$ satisfies
\begin{equation}\label{14}
-\div ( |Du|^{n-2} Du) = f \text{ in }S^n
\end{equation}
then for every $q > 1$ one can bound $\|u\|_{L^q(S^n)}$ in terms of $\|f\|_{L^1(S^n)}$.
This in turn will give information on solutions of
\begin{equation}\label{15}
-\div ( |Du|^{n-2} Du) = g \text{ in }\RR^n
\end{equation}
where $g$ is, say, compactly supported.
Indeed, the stereographic projection $S^n\stackrel{\pi}{\to}\RR^n$ (from the North pole of $S^n$) being conformal, if $u$ solves (15), then $u\circ\pi$ solves (14) with $f = \la g\circ\pi$ where $\la$ is a smooth function on $S^n\setminus\{\text{North}\}$. We therefore can estimate $u$ in $L^q_{\text{loc}}$ provided we know how to estimate $u\circ\pi$. This is what we achieve below.
\begin{rem} \emph{Note that if we were to try symmetrization on (15), on the $n$-ball of radius $R$ with the idea of letting $R\to\infty$, we could only obtain an estimate
of the Schwarz symmetrization of $u$ by the solution of
[p.~278]
\[-\div ( |Du_R|^{n-2} Du_R) = g
\]
with Dirichlet conditions on $B_R$. Now if $g$ is, say, $\chi_{B(1)}$, then $u_R$ tends to infinity pointwise, as is easily seen. This is related to the fact that $-\div ( |Du|^{n-2} Du) = 0$ admits as solution $C \ln(1/|x|)$ (with $C = C(n)$), which does not tend to zero as $|x|\to\infty$.}
\end{rem}
\begin{rem} \emph{These techniques work equally well with $S^n$ replaced by a bounded domain, with Neumann boundary conditions. The relevant isoperimetric inequality being that for the ordinary perimeter [4]. For another approach
to estimates in Neumann problems in Euclidean space via symmetrization,
see Maderna and Salsa [7] and Tricarico [14].}
\end{rem}
\begin{rem}
\emph{Similar estimates on more general equations may be obtained in
the spirit of Talenti's results in $n$-space [12, 13].}
\end{rem}
\smallskip

\textbf{2.2.}  We now come back to the problem of estimating $u$ such that
\[-\div ( |Du|^{n-2} Du) = f \text{ in }S^n, \quad u\in W^{1,n}(S^n).\]
We shall estimate the distribution function $\mu(t)=|u > t|$ of $u$.
We need five steps:

(a) We normalize $u$ by addition of a constant in such a way that
\[\text{$|u > 0|$ and $|u < 0|$ are both $\leq (1/2)|S^n|$.}
\]

(b) We multiply the equation by $(u - t)_+$ for $t\in\RR$, integrate and
differentiate with respect to $t$. This yields
\[-\frac d{dt}\int_{u>t}|Du|^n=\int_{u>t}f\leq \|f\|_{L^1(S^n)}.
\]

[p.~279]

(c)
\[\left(\frac{\frac d{dt}\int_{u>t}|Du|^n}{\frac d{dt}\int_{u>t}|Du|}\right)^{-1/(n-1)}
\leq\frac{C\mu'(t)}{\frac d{dt}\int_{u>t}|Du|}
\]
for almost every $t$. This can be proved by replacing $d/dt$ by a finite
difference and using Jensen's inequality for the convex function
$t\mapsto t^{-1/(n-1)}$ as in Talenti [13].

(d) As a consequence of Theorem 2, for almost every $t$,
\[-\frac d{dt}\int_{u>t}|Du|= P(u > t).
\]

(e) By the isoperimetric inequality and (a), if $t > 0$, one has
\[ P(u> t) \geq C \mu^{1-1/n}(t).
\]

Putting (b)-(e) together, we obtain
\begin{equation}\label{16}
-\mu'\geq C\mu.
\end{equation}
Now, this proves that $\overline{u^+}(s) = O(|\ln s|)$ for $s$ small, and as
\begin{equation}\label{17}
\int_{u>0}u^q=\int_0^{|S^n|}(\overline{u^+})^q ,
\end{equation}
we obtain an estimate of $\|u^+\|_{L^q}$ for every $q >1$.

One argues similarly on $u^-$ and obtains a bound on every $\int_{S^n}|u|^q$ ($q>1$) in terms of $\|f\|_{L^1}$.

\section{Application}

Let $m\geq 1$ be an integer and $a_1,\dots, a_m\in\RR^n$ ($n\geq 2$), $\gamma_1,\dots,\gamma_m\in\RR$ and
$p> 1$. Assume $\sum_{i =1}^m\gamma_i=0$ if $p> n$. We are interested in the following
[p. 280]
problem:
\begin{equation}\label{18}
\begin{array}{rl}
    -\div (|Du|^{p-2} Du) = \sum_{i=1}^m\gamma_i\delta(x-a_i)&\qquad\text{ on }\RR^n \\
    u\to 0 &\qquad\text{ as } |x|\to\infty
\end{array}
\end{equation}
By a solution, we mean a $u\in C^1(\RR^n \setminus\{a_1,\dots, a_m\})$ which satisfies $|Du|^{p-1}\in L^1_{\text{loc}}$
and solves (18) in the weak sense.

We then have (Kichenassamy [5]):
\begin{theorem}
    Problem (18) has a solution. It is unique if one imposes the condition
    \begin{equation}\label{19}
        u-\sum_{i=1}^m\gamma_i\varphi(x-a_i)\in L^\infty(\RR^n)
    \end{equation}
    where $\varphi(x)=C|x|^{(p-n)/(p-1)}$ (resp.\ $C\ln(1/|x|)$ if $p=n$) and $C$ is adjusted to make $\varphi$ a solution of $-\div (|D\varphi|^{p-2} D\varphi)=\delta$.
\end{theorem}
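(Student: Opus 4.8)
The plan is to obtain existence from the a priori estimates of \S2 (and, for $p\ne n$, from [5]) by a regularization-and-compactness argument, and to prove uniqueness by a monotonicity argument in which (19) is used in an essential way.

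\emph{Existence.} Mollify the datum: take $f_k\in C_c^\infty(\RR^n)$ with $f_k\to\nu:=\sum_i\gamma_i\delta(\cdot-a_i)$ as measures, $\|f_k\|_{L^1}\le\sum_i|\gamma_i|$, and $\int_{\RR^n}f_k=\sum_i\gamma_i$. Solve $-\div(|Du_k|^{p-2}Du_k)=f_k$ with the appropriate decay: for $1<p<n$ directly on $\RR^n$ by minimizing $v\mapsto\int(\tfrac1p|Dv|^p-f_kv)$ over the homogeneous Sobolev space $\mathcal D^{1,p}(\RR^n)$ (coercive by the Sobolev inequality, strictly convex); for $p=n$ by transplanting to the compact manifold $S^n$ through the conformal reduction of \S2 --- the very device that avoids the blow-up of $B_R$-truncations noted in the Remark --- and pulling back by $\pi$; for $p>n$ by solving on expanding balls $B_R$ with zero boundary data and passing to $R\to\infty$. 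The estimates of \S2 (and their counterparts from [5]) bound $\|u_k\|_{L^q_{\mathrm{loc}}}$ uniformly for the admissible $q$ (every $q>1$ when $p=n$), while the standard truncation estimate bounds $u_k$ in $W^{1,r}_{\mathrm{loc}}$ for $r<n(p-1)/(n-1)$ and $|Du_k|^{p-1}$ in $L^1_{\mathrm{loc}}$. Passing to a subsequence, $u_k\rightharpoonup u$ in $W^{1,r}_{\mathrm{loc}}$ and $u_k\to u$ a.e.; one then proves $Du_k\to Du$ a.e. by testing the difference of the equations against a cut-off truncation of $u_k-u$ and using the monotonicity of $\xi\mapsto|\xi|^{p-2}\xi$, so that $|Du_k|^{p-2}Du_k\to|Du|^{p-2}Du$ in $L^1_{\mathrm{loc}}$ and $u$ solves (18) weakly with $|Du|^{p-1}\in L^1_{\mathrm{loc}}$. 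Since $\nu$ vanishes away from the $a_i$, interior regularity for the $p$-Laplacian gives $u\in C^1(\RR^n\setminus\{a_1,\dots,a_m\})$, and $u\to0$ at infinity follows from the global integrability produced by symmetrization (the weak-$L^{n(p-1)/(n-p)}$ bound for $1<p<n$, the estimate pulled back from $S^n$ for $p=n$, the cancellation $\sum_i\gamma_i=0$ for $p>n$) together with the interior $C^1$ bounds near infinity.

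\emph{Uniqueness.} Let $u_1,u_2$ solve (18) and satisfy (19). Because (19) prescribes the same explicit singular behaviour of $u_1$ and $u_2$ at each $a_i$, the difference $w:=u_1-u_2$ lies in $L^\infty(\RR^n)$, is $C^1$ off the $a_i$, and tends to $0$ at infinity. Subtracting the weak formulations,
\[\int_{\RR^n}\bigl(|Du_1|^{p-2}Du_1-|Du_2|^{p-2}Du_2\bigr)\cdot D\psi=0\quad\text{for all }\psi\in C_c^\infty(\RR^n\setminus\{a_1,\dots,a_m\}).\]
Inserting $\psi=T_\varepsilon(w)\,\zeta_R\,\eta_\rho$, with $T_\varepsilon$ the truncation at level $\varepsilon$, $\zeta_R$ a cut-off at scale $R$ near infinity and $\eta_\rho$ vanishing in $\rho$-balls about the $a_i$, and letting $\rho\to0$, then $R\to\infty$, then $\varepsilon\to\infty$, one obtains
\[\int_{\RR^n}\bigl(|Du_1|^{p-2}Du_1-|Du_2|^{p-2}Du_2\bigr)\cdot Dw=0.\]
Strict monotonicity then forces $Du_1=Du_2$ a.e., so $w$ is constant, and since $w\to0$ at infinity, $w\equiv0$.

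\emph{Main obstacle.} The delicate point is the limit $\rho\to0$ in the uniqueness argument. Differentiating $\eta_\rho$ creates boundary integrals over $\partial B_\rho(a_i)$ of size $|Du_j|^{p-1}\,|w|\,|\partial B_\rho(a_i)|\sim\rho^{1-n}\cdot O(1)\cdot\rho^{n-1}=O(1)$, which do not obviously vanish, because (19) controls $w$ but not $Dw$ near $a_i$. Removing them requires the \emph{precise} asymptotics of $Du_j$ at $a_i$ --- that it is asymptotic to the gradient of the fundamental solution of the appropriate weight there --- so that the difference $|Du_1|^{p-2}Du_1-|Du_2|^{p-2}Du_2$ decays strictly faster than either flux separately. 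This is where Serrin's theory of isolated singularities of quasilinear equations enters, and it is the technical heart of [5]. A lesser difficulty is upgrading the global integrability estimates to genuine pointwise decay at infinity when $1<p<n$.
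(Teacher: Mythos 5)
Your existence argument follows the paper's overall strategy---mollify the datum, solve the regularized problem, control it by the symmetrization estimates of \S2 (using the conformal transplant to $S^n$ when $p=n$), then pass to the limit---and the variants you propose for $p\ne n$ (direct minimization over $\mathcal D^{1,p}(\RR^n)$ for $1<p<n$, exhaustion by balls for $p>n$) are reasonable substitutes for the paper's choice of Dirichlet problems on $\{|x|<k\}$. However, you omit the step that makes the normalization (19) attainable for the constructed solution. The paper does not merely show that $u_k$ converges to a weak solution: it first establishes the \emph{uniform} estimate $\bigl|u_k-\sum_i\gamma_i\varphi_k(x-a_i)\bigr|\le C$ (relation (21)), deduced from the symmetrization $L^q$-bound together with a comparison/maximum-principle argument on small annuli around the $a_i$'s, and then passes this to the limit to obtain (22), which is exactly (19). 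Your argument produces a weak solution of (18) that is $C^1$ off the singular set and tends to $0$ at infinity, but weak $W^{1,r}_{\mathrm{loc}}$ convergence plus a.e.\ convergence of gradients does not by itself determine the singular profile of $u$ at each $a_i$, so you have not shown that the solution you build lies in the class where uniqueness is asserted. This is the substantive gap in the existence half.

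For uniqueness the paper offers no proof---it simply cites [5]---so there is nothing to compare your sketch against. Your monotonicity argument is a sensible outline, and the obstacle you flag is the genuine one: passing $\rho\to0$ requires asymptotics of $Du_j$ near each $a_i$ that are strictly finer than what (19) provides (namely, that the flux difference decays faster than either flux, a Serrin-type statement on isolated singularities, which is precisely what the regularity theory of [5] supplies). You did not overlook this; you correctly identified it as the technical heart that your proposal does not carry out.
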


These problems are motivated by models of quark confinement.

We sketch the proof in the case $p = n$. The case $p < n$ will be briefly
alluded to, and the case $p > n$ which is based on a different technique,
is omitted. We shall make use of a $C^{1+\alpha}$ regularity theorem of which a proof, in a very general framework, is given in [5].

\begin{proof}[\textsc{Proof (case $p = n$)}.] We seek a solution of $-\div (|Du|^{p-2} Du)  = 0$ on $\RR^n \setminus \{a_1,\dots,a_n\}$ which behaves near each $a_i$ like $\varphi$ near $0$. More precisely, we proceed in two steps:

(a) \underline{An approximate solution.} We choose a sequence $(f_k)$ of functions supported in a fixed ball, of bounded $L^1$ norm, and that tend weakly to $\sum_{i=1}^m\gamma_i\delta(x-a_i)$. It is easily seen that if we can solve
\begin{equation}\label{20}
\begin{array}{rl}
    Au_k = f_k &\text{in }\RR^n\qquad (A:=-\div (|Du|^{p-2} Du) )\\
    u_k\to 0     &\text{at infinity, }
\end{array}
\end{equation}
with $u_k$ bounded (independently of $k$) on spheres around the $a_i$'s then
[p.~281]
\begin{equation}\label{21}
    \left| u_k-\sum_{i=1}^m\gamma_i\varphi_k(x-a_i)\right|\leq C
\end{equation}
where $\varphi_k$ tends to $\varphi$ in a suitable way. It will follow from this estimate that one can pass to the limit $k\to+\infty$ and obtain a solution $u$ of $Au = 0$ on $\RR^n \setminus \{a_l,\dots,a_m\}$ which, having the "right singularities," is in fact a solution to our original problem. We focus here on the estimates on $u_k$.

(b) \underline{Estimates on $u_k$.} $\pi$ being the stereographic projection,
$u_k\circ\pi$ is a solution of $Aw_k = g_k$ where $Aw$ is $-\div_{S^n}(|Du|^{p-2} Du)$ on $S^n$, and $\la f_k=g_k$.
It follows that $w_k$, and hence $u_u$ is bounded in $L^1_{\text{loc}}$ for $q > n$ say. It further follows by a combination of regularity techniques (see [5]) that $u_k$ is locally bounded in $C^{1+\alpha}$ on every compact set away from $\{a_l,\dots,a_m\}$.

This is enough to pass to the limit uniformly on every
$K \Subset \RR^n\setminus\{a_l,\dots,a_m\}$. Relation (21) gives in the limit
\begin{equation}\label{22}
    \left| u-\sum_{i=1}^m\gamma_i\varphi(x-a_i)\right|\leq C
\end{equation}
from which the proof follows.
\end{proof}
\begin{remk} \emph{When $p < n$ one can work on approximate solutions $u_k$ given by
\begin{equation}\label{23}
Au_k=f_k\text{ in } | x|< k
\end{equation}
with Dirichlet conditions. One can estimate $u_k$ by classical symmetrization
theorems: $|u_k|^*\leq C\varphi$ (because the relevant symmetrized problem has its solution uniformly bounded by $\varphi$, independently of $k$). One then notices that, by the Hardy-Littlewood theorem,
\begin{equation}\label{24}
\int_{|x|\leq k}e^{-|x|^2}|u_k|^q\leq C\int_{\RR^n}e^{-|x|^2}\varphi^q
\end{equation}
with $q > p-1$ chosen so that the right-hand side converges. The $C^{1+\alpha}$ bound on the $u_k$'s now follows from a regularity theorem (see [5]).}
\end{remk}
[p.~282]

\begin{quote}
\end{quote}
Satyanad Kichenassamy

\end{document}